\newcommand{\comment}[1]{}
\newtheorem{lem}{Lemma}
\newtheorem{propn}{Proposition}
\newtheorem{thm}{Theorem}
\theoremstyle{remark}
\newtheorem*{Rem}{Remark}
\theoremstyle{definition}
\newcommand{\R}{\mathbb R}
\newcommand{\Z}{\mathbb Z}
\newcommand{\T}{\mathbb T}
\newcommand{\N}{\mathbb N}
\newcommand{\C}{\mathbb C}
\DeclareMathOperator{\supp}{supp}
\DeclareMathOperator{\lcm}{lcm}
\newcommand{\D}{\delta}
\newcommand{\E}{\epsilon}
\newcommand{\VE}{\varepsilon}
\newcommand{\A}{\alpha}
\newcommand{\lm}{\lambda}
\newcommand{\be}{\begin{equation}}
\newcommand{\ee}{\end{equation}}
\newcommand{\bee}{\begin{equation*}}
\newcommand{\eee}{\end{equation*}}
\begin{document}
\title{An Optimal Version of S\'ark\"ozy's Theorem}
\author{Neil Lyall\quad\quad\quad\'Akos Magyar
}

\address{Department of Mathematics, The University of Georgia, 
Athens, GA 30602, USA}
\email{lyall@math.uga.edu}
\address{Department of Mathematics, University of British Columbia, Vancouver, B.C. V6T 1Z2, Canada}
\email{magyar@math.ubc.ca}

\begin{abstract}
Using Fourier analytic techniques, we prove that if $\VE>0$, $N\geq \exp\exp(C\VE^{-1}\log\VE^{-1})$ and $A\subseteq\{1,\dots,N\}$, then there must exist $t\in\N$ such that
\[\frac{|A\cap (A+t^2)|}{N}>\left(\frac{|A|}{N}\right)^2-\VE.\]

This is a special case of results presented in Lyall and Magyar \cite{LM3} and we will follow those arguments closely. We hope that the exposition of this special case will serve to illuminate the key ideas contained in \cite{LM3}, where many of the analogous arguments are significantly more technical.
\end{abstract}

\maketitle

\setlength{\parskip}{5pt}

\section{Introduction}

A striking and elegant result in density Ramsey theory states that in any subset of the integers of positive upper density there necessarily exist two distinct elements, in fact infinitely many pairs of distinct elements, whose difference is a perfect square. This is equivalent to the following (finite) result:

\begin{thm}\label{1}
Let $A\subseteq[1,N]$ and $\D=|A|/N$. If $N\geq N(\D)$, then there exists $t\ne0$ such that $A\cap(A+t^2)\ne\emptyset$.
\end{thm}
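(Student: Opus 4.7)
The natural approach is a Fourier-analytic density increment argument. Assume for contradiction that $A\cap(A+t^2)=\emptyset$ for every $1\le t\le\sqrt{N}$, and set $\D=|A|/N$. Consider the weighted count
\[
\Lambda(A)=\frac{1}{\lfloor\sqrt{N}\rfloor}\sum_{1\le t\le\sqrt{N}}\,\sum_{x\in\Z}1_A(x)\,1_A(x+t^2),
\]
which vanishes under our assumption, whereas a random set of density $\D$ would yield $\Lambda(A)\approx\D^2N$. Writing $1_A=\D\cdot 1_{[1,N]}+f$ and expanding bilinearly, Plancherel's identity on $\Z$ expresses the resulting discrepancy as an integral
\[
\int_\T\widehat{1_A}(\xi)\,\overline{\widehat{1_A}(\xi)}\,\hat\nu(\xi)\,d\xi,
\]
where $\nu$ is the normalized indicator of the set of squares in $[1,\sqrt{N}]$.

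First I would perform the standard circle method dissection of $\hat\nu$ into major and minor arcs. On the minor arcs, Weyl's inequality for quadratic exponential sums gives decay $|\hat\nu(\xi)|=o_N(1)$ whenever $\xi$ is not well approximated by a rational with a small denominator, and combined with Plancherel this shows the minor-arc contribution to the integral is of smaller order than $\D^2N$ once $N$ is sufficiently large. Consequently some major arc must carry the discrepancy, which, after a standard pigeonholing, produces integers $a$ and $q$ with $q$ bounded polynomially in $\D^{-1}$ and an $L^2$-concentration of $\hat f$ in a narrow window around the frequency $a/q^2$.

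Next I would convert this Fourier concentration into a genuine density increment. The standard lemma is that if $\widehat{1_A-\D\,1_{[1,N]}}$ has $L^2$-mass of order $\D$ near the rational $a/q^2$, then $A$ has density at least $\D+c\D^2$ on some arithmetic progression $P$ whose common difference is the \emph{perfect square} $q^2$ and whose length $M$ is at least a polynomial power of $N$. The fact that the common difference is a square is the essential point: affinely rescaling $A\cap P$ into $[1,M]$ sends square differences to square differences (if $a_1,a_2\in P$ satisfy $a_1-a_2=t^2$ then $q\mid t$, so $t=qs$ and the rescaled elements differ by $s^2$), so the rescaled set $A'$ still contains no pair of the form $a'-b'=s^2$ and we have reduced the original question to the same question at strictly higher density on a shorter interval.

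Iterating produces a sequence $(N_k,\D_k)$ with $\D_{k+1}\ge\D_k+c\D_k^2$ and $N_{k+1}$ at least a square-root-type power of $N_k$. Summing a geometric series shows the density must exceed $1$ after $O(1/\D)$ steps, which is absurd, and unwinding the composition of square-root reductions forces $N$ to be at least tower-type large as a function of $\D$, yielding the theorem. The main technical obstacle, and the source of the rapid blow-up in $N(\D)$, will be quantifying the Weyl decay of $\hat\nu$ with effective uniformity in the parameter separating major and minor arcs, and then controlling how the denominator $q_k$ accumulates through the iteration so that the progressions $P_k$ of common difference $q_k^2$ remain long enough for the next increment step to apply.
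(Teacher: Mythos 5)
Your outline is a correct high-level sketch of the classical density-increment proof of S\'ark\"ozy's theorem: locate a large Fourier coefficient of the balanced function near a rational with small denominator, pass to a sub-progression whose common difference is the perfect square $q^2$ (crucial, so that $q^2\mid t^2\Rightarrow q\mid t$ and the rescaled set still avoids square differences), iterate until the density exceeds $1$. This is sound, although at the level of a plan it glosses over two standard technicalities: the bilinear expansion of $\Lambda$ around $\D\,1_{[1,N]}$ carries boundary and cross terms that must be controlled, and one needs the Weyl estimate stated so that the major arcs sit at rationals with square denominators $a/q^2$ (which the paper isolates as Lemma~\ref{Weyl Estimates}, citing \cite{LM1}).

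The paper, however, does not take this route and never iterates a density increment or passes to a sub-progression. It proves the stronger Theorem~\ref{finite1} (of which Theorem~\ref{1} is the case $\VE<\D^2$) by an energy-pigeonhole argument carried out entirely on the Fourier side, in the spirit of Bourgain~\cite{Bourgain} and Magyar~\cite{Magyar}. Proposition~\ref{dichotomy} says that at a fixed scale $\lm$, either some $t\in(\lm,2\lm]$ already gives $|A\cap(A+t^2)|>(\D^2-\VE)N$, or else $\widehat{1_A}$ carries $L^2$-mass at least $\VE N/10$ on a thin union $\Omega_{\eta_\VE,\lm,\lm}$ of annuli around the rationals $a/q_{\eta_\VE}^2$. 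Applying this at $J>10/\VE$ geometrically separated scales produces pairwise disjoint sets $\Omega_j$; if the ``structured'' alternative held at every scale, summing would exceed Plancherel's bound $\int|\widehat{1_A}|^2\le N$. The role played by your sub-progression step is instead absorbed into the counting operator $\Lambda_q$ (restricting to $t\equiv 0\pmod q$) and the smooth convolutions $f*\psi_{q,L}$ (which locally average over progressions of difference $q^2$), but the original set $A$ is never replaced. This buys two things the density increment cannot deliver: the optimal conclusion $|A\cap(A+t^2)|>(\D^2-\VE)N$ rather than mere non-emptiness (an increment argument discards $A$ after the first step, so it cannot say anything quantitative about $A$ itself), and a clean double-exponential bound $N\geq\exp\exp(C\VE^{-1}\log\VE^{-1})$ in place of whatever accumulates from compounding the length losses over $O(1/\D)$ increment steps.
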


This result was originally conjectured by L. Lov\'asz and eventually verified independently by Furstenberg \cite{Furst} and S\'ark\"ozy \cite{Sarkozy}, using techniques from ergodic theory and Fourier analysis (circle method) respectively.

A simple averaging argument, due to Varnivides (see appendix), shows that Theorem \ref{1} is equivalent to the fact that given any $0<\D\leq1$ there exists a $c(\D)>0$ such that any $A\subseteq[1,N]$ with $|A|=\D N$ must satisfy
\[\frac{1}{N^{1/2}}\sum_{t=1}^{N^{1/2}}\frac{|A\cap(A+t^2)|}{N}\geq c(\D).\]
In particular we can conclude that the set $A$ will contain at least $c(\D)N$ pairs of elements that are the \emph{same} square difference apart.

The purpose of this note is to give an essentially self contained proof
 of the following result, closely following the approach taken in Lyall and Magyar \cite{LM3}. We hope that this exposition will also serve to illuminate the key ideas contained in \cite{LM3}, where many of the analogous arguments are significantly more technical.

\begin{thm}\label{finite1}
Let $A\subseteq[1,N]$ and $\VE>0$. If $N\geq\exp\exp(C\VE^{-1}\log\VE^{-1})$, then there exists $t\ne0$ such that
\be\label{optimal intersect}
\dfrac{|A\cap(A+t^2)|}{N}>\left(\dfrac{|A|}{N}\right)^2-\VE.
\ee
\end{thm}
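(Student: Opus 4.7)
Set $\delta=|A|/N$ and $M=\lfloor\sqrt{N}\rfloor$. The plan is to work with the averaged count
\[
\mathcal{N}(A)\ :=\ \frac{1}{M}\sum_{t=1}^{M}|A\cap(A+t^{2})|,
\]
for if $\mathcal{N}(A)>(\delta^{2}-\VE)N$ then some $t\le M$ witnesses \eqref{optimal intersect}. Expanding by Fourier inversion on $\Z$ gives
\[
\mathcal{N}(A)\ =\ \int_{0}^{1}|\widehat{1_{A}}(\xi)|^{2}\,\widehat{\sigma}_{M}(\xi)\,d\xi,\qquad \widehat{\sigma}_{M}(\xi)=\frac{1}{M}\sum_{t=1}^{M}e(\xi t^{2}),
\]
and the rest of the argument is a Hardy--Littlewood circle method decomposition coupled with an iterative density increment.

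I would split $[0,1]$ into major arcs $\mathfrak M$ (small neighbourhoods of rationals $a/q$ with $q\le Q(\VE)$ and radius $\eta(\VE)$) and the complementary minor arcs $\mathfrak m$. On $\mathfrak M$, classical Gauss sum and Weyl-type estimates approximate $\widehat{\sigma}_{M}(\xi)$ by $G(a,q)$ times a smooth Fej\'er-like kernel, the $\xi=0$ contribution yielding the ``main term'' $\delta^{2}N$. On $\mathfrak m$, a Weyl minor arc bound gives $\|\widehat{\sigma}_{M}\|_{L^{\infty}(\mathfrak m)}=o_{\VE}(1)$, so Plancherel controls the minor arc contribution by $o(\VE N)$. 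Combining these shows that if $\widehat{1_{A}}$ has no large Fourier mass at any rational $a/q$ with $q\le Q$ other than $\xi=0$, then $\mathcal{N}(A)\ge(\delta^{2}-\VE)N$ and we are done.

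If instead the major arc contribution falls short, a standard $L^{\infty}/L^{2}$ pigeonhole argument extracts a frequency $\xi_{0}\approx a_{0}/q_{0}$ with $q_{0}\le Q$ and $|\widehat{1_{A}}(\xi_{0})|\gtrsim\VE N$. This translates into a density increment $|A\cap P|/|P|\ge\delta+c\VE$ on some arithmetic progression $P\subset[1,N]$ of common difference $q_{0}^{2}$ and length $|P|\gtrsim\sqrt{N}/Q^{O(1)}$. It is here that the use of $t^{2}$ rather than $t$ is essential: because $q_{0}^{2}\cdot a_{0}/q_{0}\in\Z$, passing to a progression of step $q_{0}^{2}$ kills the oscillation, whereas for the linear problem one would only obtain an increment on a Bohr set. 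Rescaling $P$ to an interval $[1,N_{1}]$ reduces the problem to the same statement with $\delta+c\VE$ in place of $\delta$.

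One then iterates this dichotomy. Since the density cannot exceed $1$, the process terminates after at most $O(\VE^{-1})$ steps, and at termination the major arc analysis delivers the required $t$. The main obstacle---and what dictates the double-exponential bound---is the quantitative bookkeeping in the density increment step: choosing $Q,\eta$ polynomial in $\VE^{-1}$ so that each iteration costs only $N\mapsto N^{1/2}\VE^{O(1)}$. After $O(\VE^{-1})$ iterations this requires $\log\log N\gtrsim \VE^{-1}\log\VE^{-1}$, matching the hypothesis $N\ge\exp\exp(C\VE^{-1}\log\VE^{-1})$. The delicate part is not the main term (a routine calculation) but the Weyl minor arc bound and the extraction of a progression with \emph{squared} common difference compatible with the arithmetic of Gauss sums.
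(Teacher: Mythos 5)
Your approach is the classical Sárközy density-increment strategy, but there is a genuine gap: the density-increment iteration does not prove the statement you are trying to prove. When you pass from $A\subseteq[1,N]$ to $A'=A\cap P$ rescaled to $[1,N_1]$ with $N_1\approx\sqrt N/Q^{O(1)}$, the conclusion you eventually obtain at termination is of the form $|A^{(k)}\cap(A^{(k)}+t^2)|>(\delta_k^2-\VE)N_k$ for the final restricted set $A^{(k)}\subseteq[1,N_k]$. Pulling this back to the original $A$ only yields $|A\cap(A+s^2)|\geq|A^{(k)}\cap(A^{(k)}+t^2)|\gtrsim N_k$, and $N_k$ is many square-roots smaller than $N$. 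Since the theorem is vacuous unless $\delta^2>\VE$, the target $(\delta^2-\VE)N$ is of order $N$, which is far larger than anything a subprogression of length $\ll N$ can supply. Density increment proves Sárközy's original qualitative result (Theorem~\ref{1}), and via Varnavides-type averaging gives a count of $c(\delta)N$ pairs, but it cannot reach the $\VE$-optimal bound $\delta^2-\VE$, because the conclusion is not inherited under restriction to sparse subprogressions.

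The paper avoids this by replacing density increment with an \emph{energy pigeonholing} argument, in the spirit of Bourgain. The dichotomy (Proposition~\ref{dichotomy}) is: either $|A\cap(A+t^2)|>(\delta^2-\VE)N$ for some $t\in(\lm,\lm+\mu]$ (a conclusion about the \emph{original} $A$ and $N$), or $\int_{\Omega_{\eta,\lm,\mu}}|\widehat{1_A}|^2\geq\VE N/10$, where $\Omega_{\eta,\lm,\mu}$ is an annular neighbourhood of the rationals $a/q_\eta^2$ at a scale dictated by $\lm,\mu$. One then runs this dichotomy at $J>10/\VE$ different scales $\lm_1\ll\lm_2\ll\cdots\ll\lm_J$ chosen so that the $\Omega_j$ are pairwise disjoint. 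Plancherel bounds the total Fourier mass by $|A|\leq N$, so the bad alternative can occur at fewer than $10/\VE$ scales, and the good alternative must hold somewhere. No rescaling ever takes place, and the set $A$ and length $N$ are never modified; the ``iteration'' is over scale parameters with disjoint Fourier footprints, not over subprogressions. This is what makes the $(\delta^2-\VE)$ conclusion achievable, and it is the central idea your sketch is missing.

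One smaller remark: your count of iterations and the resulting $\exp\exp$ bound happen to match the paper's, but this is a coincidence of bookkeeping rather than evidence the argument closes; in the paper's version the double exponential arises from $q_\eta=\lcm\{1\leq q\leq\eta^{-2}\}\leq\exp(C\eta^{-2})$ with $\eta=\eta_\VE=\exp(-C\VE^{-1}\log\VE^{-1})$, together with the requirement $N\gg\eta_\VE^{-2}\lm_J^2$ and $\lm_{j+1}\geq\eta_\VE^{-2}\lm_j$ over $J\approx\VE^{-1}$ scales.
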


We note that in general the lower bound in (\ref{optimal intersect}) is sharp, or rather ``$\VE$-optimal". This can be seen by considering, for example, random sets.


\section{Preliminaries}

\subsection{Fourier analysis on $\Z$}
If $f:\Z\rightarrow\C$ is a function for which $\sum_{n\in\Z}|f(n)|<\infty$ we will say that $f\in L^1=L^1(\Z)$ and define 
\be
\|f\|_1=\sum_{n\in\Z}|f(n)|.\ee
For $f\in L^1$ we define its \emph{Fourier transform} $\widehat{f}:\T\rightarrow\C$ by
\be
\widehat{f}(\A)= \sum\limits_{n\in\Z}f(n)e^{-2\pi i n\A}.\ee

The summability assumption on $f$ ensures that $\widehat{f}$ is a continuous function on the circle $\T$ (which we will identify with the interval of real numbers $[0,1]$) and that in this setting the Fourier inversion formula and Plancherel's identity, namely
\[f(n)=\int_{0}^1\widehat{f}(\A)e^{2\pi i n\A}d\A
\quad\quad\text{and}\quad\quad
\int_0^1|\widehat{f}(\A)|^2 d\A=\sum_{n\in\Z}|f(n)|^2\]
are simply immediate consequences of the familiar orthogonality relation
\[\int_0^1e^{2\pi i n \A}d\A=\begin{cases}
1\quad\text{if \ $n=0$}\\ 
0\quad\text{if \ $n\ne0$}
\end{cases}.\]

Defining the convolution of $f$ and $g$ to be
\[f*g(n)=\sum_{\ell\in\Z}f(n-\ell)g(\ell)\]
it follows that if $f,g\in L^1$ then $f*g\in L^1$ with
\[\|f*g\|_1\leq\|f\|_1\|g\|_1\quad\quad\text{and}\quad\quad\widehat{f*g}=\widehat{f}\,\widehat{g}.\]

Finally we remark that it follows from the Poisson Summation Formula that if $\psi\in\mathcal{S}(\R)$, then
\be
\widehat{\psi}(\A)=\sum_{n\in\Z}\widetilde{\psi}(\A-n)
\ee
where
\[\widetilde{\psi}(\xi)=\int_{\R}\psi(x)e^{-2\pi i x\xi}\,dx\]
denotes the Fourier transform (on $\R$) of $\psi$.

\subsection{Counting square differences 
}

Let $A\subseteq[1,N]$ and $\D=|A|/N$. 

Let $1\leq\mu\leq\lm$ be integers with $\lm^2\leq N/4$.
It is easy to verify, using the properties of the Fourier transform discussed above, that the average number of pairs of elements in $A$ whose difference is equal to the square of an integer 
$t\in(\lm,\lm+\mu]$ can be expressed as follows:
\bee\frac{1}{\mu}\sum_{t=\lm+1}^{\lm+\mu}|A\cap(A+t^2)|=\frac{1}{\mu}\sum_{t=\lm+1}^{\lm+\mu}\sum_{n\in\Z}1_A(n)1_A(n-t^2)=\int_0^1|\widehat{1_A}(\A)|^2 S_{\lm,\mu}(\A)\,d\A\eee
where 
\be\label{weyl sum}
S_{\lm,\mu}(\A)=\frac{1}{\mu}\sum_{t=\lm+1}^{\lm+\mu} e^{2\pi i t^2\A}=\frac{1}{\mu}\sum_{t=1}^{\mu} e^{2\pi i (t^2+2\lm t+\lm^2)\A }
\ee
is a classical (normalized) Weyl sum. 

\subsection{Standard Weyl sum estimates}

It is clear that whenever $|\A|\ll \mu^{-2}$ there can be no cancellation in the quadratic Weyl sum (\ref{weyl sum}), in fact the same is also true when $\A$ is close to a rational with \emph{small} denominator (i.e. there is no cancellation over sums in residue classes modulo $q$). 

We now state a precise formulation of the well known fact that this is indeed the only obstruction to cancellation. 
Lemma \ref{Weyl Estimates} is usually stated with weaker hypotheses, namely with $q$ in place of $q^2$ in (\ref{q^2}).
For a proof of this stronger result see \cite{LM1} or \cite{LM1'}.

\begin{lem}\label{Weyl Estimates}
Let $\eta>0$. If  
\be\label{q^2}
\left|\A-\frac{a}{q^2}\right|>\frac{1}{\eta^2\mu^2}
\ee 
for all $a\in\Z$ and $1\leq q\leq \eta^{-2}$,
then
\be\label{minorest}
\left|S_{\lm,\mu}(\A)\right|\leq C_1\eta.
\ee
\end{lem}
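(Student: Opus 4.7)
The plan is to apply Weyl differencing to reduce the quadratic Weyl sum to sums of linear exponentials, use Dirichlet's theorem to extract a preliminary rational approximation, and then sharpen the result through a major-arc expansion involving the classical quadratic Gauss sum.

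First I would square $|S_{\lambda,\mu}(\alpha)|$. Writing $t=s+h$ and expanding $(s+h+\lambda)^2-(s+\lambda)^2 = h(2s+h+2\lambda)$, the phase becomes linear in $s$ with coefficient $2h\alpha$. The $\lambda$-dependence collapses to a unimodular factor independent of $s$, and the resulting geometric sum over $s$ is bounded by $\min(\mu,\|2h\alpha\|^{-1})$, where $\|\cdot\|$ denotes the distance to the nearest integer. This yields the familiar Weyl differencing estimate
\[|S_{\lambda,\mu}(\alpha)|^2 \leq \frac{1}{\mu^2}\sum_{|h|<\mu}\min\bigl(\mu,\|2h\alpha\|^{-1}\bigr).\]

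Assuming toward contradiction that $|S_{\lambda,\mu}(\alpha)| > C_1\eta$, I would apply Dirichlet's theorem to $2\alpha$ to find coprime $a, q$ with $q \leq \mu$ and $|2\alpha - a/q| \leq 1/(q\mu)$. Splitting the $h$-sum into blocks of length $q$ and using the standard estimate $\sum_{1\leq h<q}\|ha/q\|^{-1}\ll q\log q$ produces the classical Weyl bound
\[|S_{\lambda,\mu}(\alpha)|^2 \leq C\left(\frac{\log\mu}{q}+\frac{1}{\mu}+\frac{q}{\mu^2}\right),\]
which for $\mu$ sufficiently large forces $q \ll \eta^{-2}\log\mu$. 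This already yields a variant of the lemma with $q$ in place of $q^2$, up to a logarithmic loss.

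The main obstacle is removing the $\log \mu$ loss, sharpening the error from $1/(q\mu)$ to the claimed $1/(\eta^2\mu^2)$, and rewriting the approximation with a square denominator. For this I would use the major-arc asymptotic expansion: writing $\alpha = a/q + \beta$ and splitting the summation variable $t$ into residue classes modulo $q$, one obtains
\[S_{\lambda,\mu}(\alpha) \approx \frac{G(a,q)}{q}\, J\bigl(\mu^2\beta\bigr),\]
where $G(a,q)=\sum_{r=1}^{q}e^{2\pi i ar^2/q}$ is a quadratic Gauss sum satisfying $|G(a,q)|\leq C\sqrt{q}$, and $J$ is a bounded oscillatory integral decaying like $|\mu^2\beta|^{-1/2}$ for large argument. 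The lower bound on $|S_{\lambda,\mu}(\alpha)|$ then forces both $q \leq C\eta^{-2}$ (from $|G(a,q)|/q\leq C/\sqrt{q}$) and $|\beta| \leq 1/(\eta^2\mu^2)$ (from the decay of $J$). Rewriting $a/q = (aq)/q^2$ exhibits the approximation $|\alpha - (aq)/q^2| \leq 1/(\eta^2\mu^2)$ with $q \leq \eta^{-2}$ for $C_1$ chosen appropriately, contradicting hypothesis (\ref{q^2}). The hardest part is establishing the Gauss sum bound together with the stationary phase estimate for $J$; once these are in hand, the remaining algebraic bookkeeping is routine.
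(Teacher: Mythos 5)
The paper does not include a proof of this lemma (it refers to \cite{LM1,LM1'}), so I assess your argument on its own terms. Your overall blueprint --- Weyl differencing, a rational approximation, and then a finer analysis of the sum restricted to residue classes mod $q$ --- is the right one, and your remark that the classical Weyl inequality already gives $q\ll\eta^{-2}\log\mu$ (with the logarithmic loss absorbable once $\mu$ is large) is correct. The bookkeeping point about the square denominator is not actually an obstacle at all: any $a/q$ with $q\leq\eta^{-2}$ is trivially $aq/q^2$ with the same $q\leq\eta^{-2}$, so nothing special is needed there.

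There is, however, a genuine gap in your Step 4. The asymptotic
$S_{\lambda,\mu}(\alpha)\approx q^{-1}G(a,q)\,J(\mu^2\beta)$
with a controlled error term is derived by approximating, for each residue class $r\pmod q$, the sum $\sum_m e(\,(qm+r)^2\beta\,)$ by an integral. The error in this step is governed by the \emph{first} derivative of the phase, $|F'(m)|\asymp q(\lambda+\mu)|\beta|$, and the usual van der Corput/Euler--Maclaurin comparison (e.g.\ Vaughan's Lemma 4.2) requires $|F'|\leq 1/2$ throughout the range. Using the Dirichlet bound $|\beta|\leq 1/(q\mu)$, this quantity is $\asymp\lambda/\mu$, which is $\gg 1$ precisely in the regime relevant to this paper, where $\lambda$ can be as large as $\sqrt{N}$ while $\mu\leq\lambda$. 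The total error in the expansion is then $O(q\lambda|\beta|)$, which is \emph{not} $\ll\eta$ when $\lambda\gg\mu$. Equivalently, if you try to write $S_{\lambda,\mu}=S_{0,\lambda+\mu}-S_{0,\lambda}$ and apply the unshifted major-arc expansion twice, the error involves $\lambda^2|\beta|$ rather than $\mu^2|\beta|$, again fatal for large $\lambda$. So the approximation you write down does not hold with a small error for the shifted Weyl sums that actually appear here.

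The remedy is to drop the Gauss-sum main term entirely and instead apply a \emph{second}-derivative van der Corput estimate directly to each residue-class sum $\sum_m e(\,(qm+r)^2\beta\,)$: since $F''(m)=2q^2\beta$ this gives $|\sum_m|\ll(\mu/q)(q^2|\beta|)^{1/2}+(q^2|\beta|)^{-1/2}$, and the estimate is shift-invariant in $\lambda$. Summing over the $q$ residue classes and normalizing yields
\[
|S_{\lambda,\mu}(\alpha)|\ \ll\ q|\beta|^{1/2}\ +\ \frac{1}{\mu|\beta|^{1/2}},
\]
valid for all $\lambda\geq\mu$. Combined with the Weyl-differencing output $q\ll\eta^{-2}$ and $|\beta|\ll(\eta^2 q\mu)^{-1}$ (so that the first term is $\ll\eta^{-2}\mu^{-1/2}\ll\eta$ once $\mu\gg\eta^{-6}$), the assumption $|S_{\lambda,\mu}(\alpha)|>C_1\eta$ forces the second term to dominate, giving $|\beta|\ll(\eta\mu)^{-2}$. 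This recovers the lemma without needing the Gauss sum at all, and in particular without the problematic major-arc error term; it is also why the cited references prove the estimate this way rather than via the standard $[1,N]$ major-arc asymptotic.
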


\begin{Rem}
It is easy to see that one can conclude from Lemma \ref{Weyl Estimates} that estimate (\ref{minorest}) also holds (under the same hypotheses as above with say  $C_1$ replaced with $2C_1$) for the ``perturbed'' Weyl sums
\[\frac{1}{\mu}\sum_{t\in(\lm,\lm+\mu]\cap\Z} e^{2\pi i t^2\A }\]
where $1\leq\mu\leq\lm$ are now no longer assumed to take on integer values, provided $\mu\gg\eta^{-1}$.
\end{Rem}

Note that Lemma \ref{Weyl Estimates}, together with the Plancherel identity, allows us to conclude that 
\[\int_0^1|\widehat{1_A}(\A)|^2 S_{\lm,\mu}(\A)\,d\A=\int_{\mathfrak{M}_{\eta,\mu}} |\widehat{1_A}(\A)|^2 S_{\lm,\mu}(\A)\,d\A+O(\eta N)\]
where \[\mathfrak{M}_{\eta,\mu}=\bigcup_{q=1}^{\eta^{-2}}\bigcup_{a=0}^{q^2-1}\left\{\A\in[0,1]\,:\, \left|\A-\frac{a}{q^2}\right|\leq \frac{1}{\eta^2\mu^2} 
\right\}.\]

However, in order to carry out our Fourier analytic arguments it will be convenient  to consider the set of equally spaced rational numbers in $[0,1]$ with denominator 
\be
q_\eta=\lcm\{1\leq q \leq \eta^{-2}\}
\ee
as opposed to the much smaller, but alas more wildly distributed, set of rational numbers described above.  

Note that it follow from elementary considerations involving prime numbers that $q_\eta\leq \exp(C\eta^{-2})$ and this accounts for one of the exponentials in the bound in Theorem \ref{1}.


\section{The Dichotomy Proposition}

We now state our key dichotomy proposition (that is stronger than we actually need for the purposes of this note) 
and demonstrate how it can be used to prove Theorem \ref{finite1} (we could have simplify matters and taken $\mu=\lm$ everywhere below).
The arguments in this section are close in spirit to, and very much influenced by, those of Bourgain \cite{Bourgain}, see also Magyar \cite{Magyar}.

Let $\eta>0$ and $1\leq\mu\leq\lm$. We define
\be
\Omega_{\eta,\lm,\mu}=\left\{\A\in[0,1]\,:\, \frac{\eta^2}{\lm^2}\leq\Bigl|\A-\frac{a}{q_\eta^2}\Bigr|\leq\frac{1}{\eta^2\mu^2}\ \text{for some $a\in\mathbb{Z}$}\right\}
\ee
where
$q_\eta=\lcm\{1\leq q \leq \eta^{-2}\}.$

\begin{propn}\label{dichotomy} Let $A\subseteq[1,N]$, $\D=|A|/N$, and $0<\VE\leq\D^2$. 
Let $\eta_\VE=\exp(-C\VE^{-1}\log\VE^{-1})$ and $q_\VE=q_{\eta_\VE}$.

If $1\leq\mu\leq\lm$ are any given pair of integers that satisfy
$\mu\gg\eta_\VE^{-1}q_\VE$ and $N\gg\eta_\VE^{-2}\lm^2$
then either
\be\label{random}
|A\cap(A+t^2)|>(\D^2-\VE)N \ \  \text{for some \ $t\in (\lm,\lm+\mu]\cap\Z$}
\ee
or
\be\label{roughint}
\int_{\Omega} |\widehat{1_A}(\A)|^2\,d\A\geq\VE N/10 
\ee
where $\Omega=\Omega_{\eta_\VE,\lm,\mu}$.
\end{propn}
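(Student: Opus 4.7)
The plan is to prove the contrapositive: assume (\ref{random}) fails, so $|A\cap(A+t^2)| \leq (\D^2-\VE)N$ for every $t \in (\lm,\lm+\mu]\cap\Z$. Averaging over such $t$ and invoking the Fourier identity of Section 2.2 yields
\[
\int_0^1 |\widehat{1_A}(\A)|^2 S_{\lm,\mu}(\A)\,d\A \leq (\D^2-\VE)N.
\]
Setting $\eta = \eta_\VE$, I partition $[0,1] = \mathcal{M} \sqcup \omega \sqcup \Omega$ relative to the rationals $\{a/q_\eta^2\}$, where $\mathcal{M}$ is the minor-arc set of points at distance exceeding $1/(\eta^2\mu^2)$ from every such rational, $\omega = \bigcup_a \omega_a$ with $\omega_a = \{|\A - a/q_\eta^2|<\eta^2/\lm^2\}$, and $\Omega = \Omega_{\eta,\lm,\mu}$.

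On $\mathcal{M}$, every $a/q^2$ with $q \leq \eta^{-2}$ is of the form $a'/q_\eta^2$ (since $q \mid q_\eta$), so Lemma \ref{Weyl Estimates} applies and gives $|S_{\lm,\mu}| \leq C_1\eta$; by Plancherel, $\bigl|\int_\mathcal{M} |\widehat{1_A}|^2 S_{\lm,\mu}\,d\A\bigr| \leq C_1\eta|A|$, which is much smaller than $\VE N$ since $\eta_\VE$ decays super-polynomially in $\VE$. On $\Omega$, the trivial bound $|S_{\lm,\mu}|\leq 1$ gives $\bigl|\int_\Omega|\widehat{1_A}|^2 S_{\lm,\mu}\,d\A\bigr|\leq \int_\Omega|\widehat{1_A}|^2\,d\A$.

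The heart of the argument is therefore the matching lower bound
\[
\int_\omega|\widehat{1_A}(\A)|^2 S_{\lm,\mu}(\A)\,d\A \geq \D^2 N - O(\VE N).
\]
I would establish this by comparing $1_A$ against a smoothed reference $\D\chi$, where $\chi = 1_{[1,N]}*\psi$ for a non-negative bump $\psi$ on $\Z$ of width $\sim\lm^2/\eta^2$ with $\sum_n\psi(n) = 1$, so that $0 \leq \chi \leq 1$, $\sum_n\chi(n) = N$, and $\widehat{\chi} = \widehat{1_{[1,N]}}\,\widehat{\psi}$ is concentrated inside $\omega_0$ at scale $\eta^2/\lm^2$. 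Writing $1_A = \D\chi + g$ (so $\widehat{g}(0) = 0$) and expanding $|\widehat{1_A}|^2 = \D^2|\widehat{\chi}|^2 + 2\D\Re(\widehat{g}\,\overline{\widehat{\chi}}) + |\widehat{g}|^2$, the main piece $\D^2\int_0^1|\widehat{\chi}|^2 S_{\lm,\mu}\,d\A$, evaluated via the Fourier identity applied to $\chi$ itself, equals $\D^2(N - o(\VE N))$ and lies essentially entirely in $\omega_0$ because of the Fourier concentration of $\widehat{\chi}$. The cross term is shown to be $O(\VE N)$ using $\widehat{g}(0) = 0$ together with a direct boundary-term computation. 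The $|\widehat{g}|^2$ term on each $\omega_a$ is analyzed via the approximation $S_{\lm,\mu}(\A) = S_{\lm,\mu}(a/q_\eta^2) + O(\eta^2)$, which follows from $|s^2(\A - a/q_\eta^2)|\leq 4\eta^2$ for $s\in(\lm,\lm+\mu]$, combined with Weyl-type Gauss-sum estimates giving $|S_{\lm,\mu}(a/q_\eta^2)| \leq C_1\eta$ for all $a$ whose reduced denominator exceeds $\eta^{-2}$; the remaining small collection of exceptional $a$ is controlled using the parameter condition $\mu \gg \eta^{-1}q_\VE$.

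Combining the three estimates with the upper bound rearranges to (\ref{roughint}). The principal technical obstacle is the lower bound on $\int_\omega|\widehat{1_A}|^2 S_{\lm,\mu}$: choosing the smoothing $\chi$ carefully so that $\widehat{\chi}$ is concentrated at scale $\eta^2/\lm^2$ inside $\omega_0$, and estimating the $|\widehat{g}|^2$ contribution on the exceptional arcs $\omega_a$ for $a \neq 0$, are the delicate steps where the hypotheses $\mu \gg \eta^{-1}q_\VE$ and $N \gg \eta^{-2}\lm^2$ come into full play.
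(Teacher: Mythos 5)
Your overall strategy --- assume the negation of (\ref{random}), pass to the Fourier side, split $[0,1]$ into $\mathcal{M}\sqcup\omega\sqcup\Omega$, kill $\mathcal{M}$ by Weyl, and lower-bound $\int_\omega$ so that the deficit must live on $\Omega$ --- is genuinely different from the paper's. The paper does not average $|A\cap(A+t^2)|$ over all $t\in(\lm,\lm+\mu]$; it introduces the restricted counting function $\Lambda_q(g,h)$, which averages only over $t$ with $q_\eta\mid t$, and the rest of the proof hinges on that restriction. Your approach discards this idea, and that is where the gap is.

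The problem is your treatment of the exceptional arcs $\omega_a$ with $a\ne 0$ and small reduced denominator. For such $a$, write $a/q_\eta^2 = a'/q'^2$ in lowest terms with $q'\le\eta^{-1}$ (these are not excluded by Lemma~\ref{Weyl Estimates}). Then $S_{\lm,\mu}(a'/q'^2)$ is essentially a complete Gauss-type sum of modulus roughly $1/q'$, and crucially its real part can be \emph{negative} --- e.g.\ at $\A=k/q$ for prime $q\equiv1\pmod 4$ it equals $(\frac{k}{q})q^{-1/2}$ up to small error. Meanwhile $\int_{\omega_a}|\widehat g|^2$ can be comparable to $\D N$ (take $A$ a union of residue classes modulo a small $q$, where $|\widehat{1_A}(k/q)|=|A|$). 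So the term $\sum_{a\ne 0}\int_{\omega_a}|\widehat g|^2 S_{\lm,\mu}$ can be negative of size comparable to $\D N$, destroying your lower bound $\int_\omega\ge\D^2 N - O(\VE N)$. The hypothesis $\mu\gg\eta^{-1}q_\VE$ only controls the $O(q'^2/\mu)$ error in approximating $S_{\lm,\mu}$ at a rational by the corresponding complete exponential sum; it does nothing to the main Gauss-sum term, which is the one with the bad sign. So the sentence ``the remaining small collection of exceptional $a$ is controlled using $\mu\gg\eta^{-1}q_\VE$'' is not an argument, and I do not believe it can be made into one in this form.

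This is exactly the scenario the paper's $q\mid t$ restriction is designed to neutralize: since $S_{\lm,\mu,q_\eta}(\A)=S_{\lm/q_\eta,\mu/q_\eta}(q_\eta^2\A)$ and $q_\eta^2\cdot(a/q_\eta^2)\in\Z$, the restricted Weyl sum equals $1$ at \emph{every} rational $a/q_\eta^2$, so the entire structured Fourier mass on $\omega$ enters with a positive sign. That is what allows Lemma~\ref{lemma1} to produce the lower bound $\Lambda_q(f_1,f_1)\ge(\D^2-\VE/2)N$ by nothing more than approximate translation-invariance of $f_1$ under shifts by $t^2$ (with $q\mid t$) plus Cauchy--Schwarz. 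Your decomposition $1_A=\D\chi+g$ with $\chi$ a single smoothed interval only isolates the $a=0$ arc; it cannot see the mass at nonzero small-denominator rationals, and your argument has no positivity mechanism there. To repair it you would either have to restrict the $t$-average to $q_\eta\mid t$ as the paper does (at which point you would essentially be reconstructing Lemma~\ref{lemma1}), or replace $\D\chi$ by a genuinely structured model function matching the distribution of $A$ in residue classes modulo $q_\eta$ --- and then the ``cross term is $O(\VE N)$'' step would also need to be redone, since $\widehat g(a/q_\eta^2)$ would be small for \emph{all} small-denominator $a$, not just $a=0$.

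One smaller point: the paper also needs a pigeonhole over a lacunary sequence $\eta_1>\eta_2>\dots$ (Lemma~\ref{lemma3}) to find a scale $\eta$ at which $\widehat\psi_{q_{j+1},\eta_{j+1}\mu}-\widehat\psi_{q_j,\eta_j\mu}$ is uniformly small; this is what ultimately produces $\eta_\VE=\exp(-C\VE^{-1}\log\VE^{-1})$ rather than a single fixed scale. Your proposal fixes $\eta=\eta_\VE$ from the start, so even if the exceptional-arc issue were resolved you would need to justify why a single scale suffices, or incorporate an analogous pigeonhole.
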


Proposition \ref{dichotomy} expresses, in our setting, the basic dichotomy that either $A$ behaves as though it were a random set, or has arithmetic structure as the Fourier transform $\widehat{1_A}$ is concentrated (on small annuli) around a fixed number of equally spaced rational points.

We shall see below that one can in fact replace (\ref{random}) in Proposition \ref{dichotomy} with the stronger statement that a positive proportion of the integers $t\in(\lm,\lm+\mu]$ satisfy the estimate $|A\cap(A+t^2)|>(\D^2-\VE)N$. More precisely (\ref{random}) can be replaced by 
\be
\left|\left\{t\in(\lm,\lm+\mu]\cap\Z\,:\,|A\cap(A+t^2)|>(\D^2-\VE)N\right\}\right|\geq \frac{c\VE}{q_{\VE/2}}\mu.
\ee

\subsection{Proposition \ref{dichotomy} implies Theorem \ref{finite1}}
Let $\VE>0$,  $\eta_\VE=\exp(-C\VE^{-1}\log \VE^{-1})$ and $q_\VE=q_{\eta_\VE}$. Fix an integer $J>10/\VE$ and let $\{\lm_j\}_{j=1}^J$ be any sequence of integers with the property that $\lm_1= C\eta_\VE^{-1} q_\VE$ and
\[\lm_j\leq \eta_\VE^{2} \lm_{j+1}\]
for $1\leq j<J$. 
It is easy to now see that the sets
$\Omega_j=\Omega_{\eta_\VE,\lm_j,\lm_j}$
are disjoint.

Suppose there exists $N\geq C\eta_\VE^{-2}\lm_J^2$ and a set $A\subseteq[1,N]$ such that
\be\label{negative}
\dfrac{|A\cap(A+t^2)|}{N}\leq \left(\dfrac{|A|}{N}\right)^2-\VE
\ee
for all integers $1\leq t\leq \sqrt{N}$. An application Proposition \ref{dichotomy} allows us to conclude that for such a set one must have
\be
\sum_{j=1}^J\int_{\Omega_j} |\widehat{1_A}(\A)|^2\,d\A\geq J\VE N/10>N
\ee
since in particular (\ref{negative}) must hold for all integers $t\in\bigcup_{j=1}^J(\lm_j,2\lm_j]$.

On the other hand it follows from the disjointness property of the sets $\Omega_j$ (which we guarantee by our initial choice of sequence $\{\lm_j\}$) and Plancherel's Theorem that
\be
\sum_{j=1}^J\int_{\Omega_j} |\widehat{1_A}(\A)|^2\,d\A\leq \int_0^1|\widehat{1_A}(\A)|^2\,d\A\leq |A|\leq N
\ee
giving a contradiction. The result now follows since we can clearly choose $\lm_J$ such that \[\eta_\VE^{-2}\lm_J^2=\exp(C\eta_\VE^{-2})=\exp\exp(C\VE^{-1}\log\VE^{-1}).\]


\section{Establishing a smooth variant of Proposition \ref{dichotomy}}

We now formulate a functional variant of Proposition \ref{dichotomy} that is well suited to our Fourier analytic approach. 

\subsection{Counting function}


For $g,h:[1,N]\rightarrow[0,1]$ and $q,\lm,\mu\in\N$ we define
\be
\Lambda_q(g,h)=\frac{q}{\mu}\sum_{\substack{t\in(\lm,\lm+\mu] \\ q|t}}\sum_{n\in\Z}g(n)h(n-t^2).
\ee

With $g=h=1_A$ this essentially gives a normalized count for the number of pairs of elements in $A$ whose difference is equal to the square of an integer
$t\in(\lm,\lm+\mu]$ for which $q|t$.

Note that it is natural to consider only those $t\in\N$ that are divisible by some (large) natural number $q$. Indeed, as a consequence of the fact that our set $A$ could fall entirely into a single congruence class $\pmod{d}$, with $1\leq d\le\VE^{-1/2}$, it follows that if there were to exist $t\in\N$ such that $A\cap(A+t^2)\ne\emptyset$ for an arbitrary set $B$, then these $t$ would necessarily have to be divisible by all $1\leq d\leq \VE^{-1/2}$ and hence by the least common multiple of all $1\leq d\leq \VE^{-1/2}$, a quantity of size $\exp(C\VE^{-1/2})$.

As before this can be expressed on the transform side as
\be\label{FTside}
\Lambda_{q}(g,h)=\int_{\T^k} \widehat{g}(\A)\overline{\widehat{h}(\A)}S_{\lm,\mu,q}(\A)\,d\A
\ee

where now
\be
S_{\lm,\mu,q}(\A)=\frac{q}{\mu}\sum_{\substack{t\in(\lm,\lm+\mu] \\ q|t}}e^{2\pi i\,t^2\A}
\ee

\begin{Rem}
If the integers $\lm$ and $\mu$ are both divisible by $q$, then it is easy to relate $S_{\lm,\mu,q}$ to the ``classical'' Weyl sum discussed above. In fact, one can easily verify that
\be
S_{\lm,\mu,q}(\A)=S_{\lm/q,\mu/q}(q^2\A).
\ee
\end{Rem}

\subsection{A smooth variant of Proposition \ref{dichotomy}}

Let $\psi:\R\rightarrow(0,\infty)$ be a Schwartz function satisfying 
\[\widetilde{\psi}(0)=1\geq\widetilde{\psi}(\xi)\geq0\quad\quad\text{and}\quad\quad \widetilde{\psi}(\xi)=0 \ \ \text{for} \ \ |\xi|>1.\]

For a given $q\in\N$ and $L>1$ we define
\be\label{normalizedpsi}
\psi_{q,L}(x)=\begin{cases}
\left(\frac{q}{L}\right)^2\psi\left(\frac{q^2\ell}{L^2}\right)& \ \ \text{if} \ \ x=q^2\ell \ \ \text{for some $\ell\in\Z$}\\
 \ 0& \ \ \text{otherwise}
\end{cases}
\ee

It follows from the Poisson summation formula that the Fourier transform (on $\Z$) of $\psi_{q,L}$ takes the form
\be
\widehat{\psi}_{q,L}(\A)
=\sum_{\ell\in\Z}\widetilde{\psi}\left(L^2\left(\A-\ell/q^2\right)\right).
\ee

Note that $\widehat{\psi}_{q,L}$ is supported on
\[M_{q,L}= \left\{\A\in[0,1]\,:\,\Bigl|\A-\frac{a}{q^2}\Bigr|\leq\frac{1}{L^2}\ \text{for some $a\in\mathbb{Z}$}\right\}\]
and that if our cutoff function $\psi$ is chosen appropriately, then
\be
\widehat{\psi}_{q_\VE,\eta_\VE \mu}-\widehat{\psi}_{q_\VE,\VE\eta_\VE^{-1}\lm}
\ee
will be essentially supported on
\[\Omega_{\eta_\VE,\lm,\mu}=M_{q_\VE,\eta_\VE \mu}\setminus M_{q_\VE,\eta_\VE^{-1}\lm}\]
in the sense that 
\be\label{cutoff}
\bigl| \widehat{\psi}_{q_\VE,\eta_\VE \mu}(\A)-\widehat{\psi}_{q_\VE,\VE\eta_\VE^{-1}\lm}(\A)\bigr|\leq\VE/10
\ee
whenever $\A\notin\Omega_{\eta_\VE,\lm,\mu}$.

\begin{propn}[Smooth functional variant of Proposition \ref{dichotomy}]\label{smoothdichotomy}

Let $f:[1,N]\rightarrow[0,1]$ and $\D=N^{-1}\sum_{x\in\Z}f(x)$. 

Let $0<\VE\leq\D^2$ and $1\leq\mu\leq\lm$ be any given pair of integers that satisfy
$\mu\gg\eta_\VE^{-1}q_\VE$ and $N\gg\eta_\VE^{-2}\lm^2$ where
$q_\VE=q_{\eta_\VE}$ with $\eta_\VE=\exp(-C\VE^{-1}\log\VE^{-1})$. 
Then there exists $0<\eta\ll\VE$ satisfying $\eta_\VE\leq\VE\eta$, 
such that either
\be\label{SR}
\Lambda_{q}(f,f)>(\D^2-\VE)N
\ee
or
\be\label{smoothint}
\int_0^1|\widehat{f}(\A)|^2\bigl|\widehat{\psi}_{q,L_2}(\A)-\widehat{\psi}_{q,L_1}(\A)\bigr|\,d\A\geq\VE N/5
\ee
where $L_1=\eta^{-1}\lm$, $L_2=\eta \mu$, and $q=q_\eta$.  
\end{propn}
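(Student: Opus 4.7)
The plan is to use the Fourier identity (\ref{FTside}) for $\Lambda_q(f,f)$, perform a circle-method split of $S_{\lm,\mu,q}$ into major arcs $M_{q_\eta,L_2}$ around rationals $a/q_\eta^2$ and a minor-arc complement, and compare the outcome with a baseline of $\D^2 N$. Throughout, $\eta$ is selected from the wide dyadic range $[\eta_\VE/\VE,\,\VE]$ afforded by the hypothesis $\eta_\VE\leq\VE\eta$, with a pigeonhole used to absorb a residual approximation error described below; set $q=q_\eta$.

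Minor arcs are controlled by Lemma \ref{Weyl Estimates} via the rescaling $S_{\lm,\mu,q}(\A)=S_{\lm/q,\mu/q}(q^2\A)$ (or the perturbed variant in the Remark, valid since $\mu/q\gg\eta^{-1}$): this gives $|S_{\lm,\mu,q}(\A)|\leq C\eta$ outside $M_{q_\eta,L_2}$, so by Plancherel the minor-arc contribution to $\Lambda_q(f,f)$ is $O(\eta N)\leq\VE N/10$. On each major arc $\A=a/q_\eta^2+\B$ with $|\B|\leq L_2^{-2}$, the choice $q=q_\eta$ collapses the Gauss sum: substituting $t=qs$ gives $e^{2\pi i t^2 a/q_\eta^2}=e^{2\pi i s^2 a}=1$ trivially, hence $S_{\lm,\mu,q}(\A)=\tfrac{q}{\mu}\sum_s e^{2\pi i q^2 s^2\B}$, a quantity independent of $a$. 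Approximating this Riemann sum by the Fresnel integral $\mu^{-1}\int_\lm^{\lm+\mu}e^{2\pi i u^2\B}\,du$ and choosing the Schwartz cutoff $\psi$ so that $\widetilde{\psi}(L_2^2\B)$ matches this integral to within $O(\VE)$ on $|\B|\leq L_2^{-2}$, one obtains $S_{\lm,\mu,q}(\A)=\widehat{\psi}_{q,L_2}(\A)+O(\VE)$ on $M_{q_\eta,L_2}$, yielding
\[\Lambda_q(f,f)=\int_0^1|\widehat{f}(\A)|^2\widehat{\psi}_{q,L_2}(\A)\,d\A+O(\VE N/5).\]

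For the baseline inequality $\int|\widehat{f}|^2\widehat{\psi}_{q,L_1}\,d\A\geq\D^2 N-\VE N/10$, I would additionally take $\psi$ to be the convolution square of a real even Schwartz function $\phi$, so that $\widetilde{\psi}=\widetilde{\phi}^{2}$ (consistent with $\widetilde{\psi}\geq 0$); since $L_1\gg q$ (from $\lm\gg\eta q_\eta$), the shifts $\widetilde{\phi}(L_1^2(\A-\ell/q^2))$ have disjoint support and thus $\widehat{\psi}_{q,L_1}(\A)=(\widehat{\phi}_{q,L_1}(\A))^{2}$ pointwise. The baseline integral then equals $\|f\ast\phi_{q,L_1}\|_2^2$, which by Cauchy--Schwarz on the effective support (of length $\leq N+CL_1^2$) exceeds $(\sum_x(f\ast\phi_{q,L_1})(x))^2/(N+CL_1^2)=(\D N)^2/(N+CL_1^2)\geq\D^2 N-O(L_1^2/N)\cdot\D^2 N$, with the error bounded by $\VE N/10$ since $L_1^2/N\leq\eta_\VE^2/\eta^2\leq\VE^2$. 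Combining the two Fourier estimates: if $\Lambda_q(f,f)\leq(\D^2-\VE)N$, subtracting yields $\int|\widehat{f}|^2(\widehat{\psi}_{q,L_2}-\widehat{\psi}_{q,L_1})\,d\A\leq-\VE N+O(\VE N/5)$, and taking absolute values gives (\ref{smoothint}).

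The principal obstacle is the major-arc approximation: the Fresnel integral $\mu^{-1}\int_\lm^{\lm+\mu}e^{2\pi i u^2\B}\,du$ does not literally equal $\widetilde{\psi}(L_2^2\B)$ for any fixed $\psi$, so reconciling the two requires both a tailored choice of $\psi$ and the pigeonhole over $\eta\in[\eta_\VE/\VE,\VE]$ to discard an exceptional small set of scales where the approximation fails. This is precisely why the proposition only asserts existence of an admissible $\eta$ in that range, rather than prescribing it outright.
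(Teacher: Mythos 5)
Your outline breaks down at the two steps where all the actual work lies. First, the minor-arc claim is false: being outside $M_{q_\eta,L_2}$ does \emph{not} make $S_{\lm,\mu,q}$ small. Writing $S_{\lm,\mu,q}(\A)=S_{\lm/q,\mu/q}(q^2\A)$ with $q=q_\eta$, Lemma \ref{Weyl Estimates} requires $q^2\A$ to avoid \emph{all} rationals $a/r^2$ with $r\leq\eta^{-2}$, i.e.\ it requires $\A$ to avoid rationals of the form $a/(r^2q_\eta^2)$, not merely $a/q_\eta^2$. For instance, near $\A=a/(2q_\eta)^2$ with $a$ odd, which lies far outside $M_{q_\eta,L_2}$, the sum $S_{\lm,\mu,q}$ has magnitude comparable to $1$. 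This is exactly why the paper does not run a one-scale circle-method split: its Lemma \ref{Weyl Estimates 2} bounds $(1-\widehat{\psi}_{q',L'_2})S_{\lm,\mu,q}$ with a \emph{finer} modulus $q'=q_{\eta'}$, $\eta'\leq\VE\eta$, and then Lemma \ref{lemma3} pigeonholes over a lacunary sequence of scales $\eta_j$ to find consecutive scales with $\|\widehat{\psi}_{j+1}-\widehat{\psi}_j\|_\infty\leq\VE/40$, so that $\widehat{\psi}_{q',L'_2}$ can be replaced by $\widehat{\psi}_{q,L_2}$. That pigeonhole over scales — not a failure of a Fresnel approximation — is the true reason the proposition only asserts existence of an admissible $\eta$ with $\eta_\VE\leq\VE\eta$.

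Second, the major-arc identification $S_{\lm,\mu,q}(\A)=\widehat{\psi}_{q,L_2}(\A)+O(\VE)$ on $M_{q_\eta,L_2}$ is also false, and no choice of $\psi$ or pigeonhole repairs it: for $\A=a/q^2+\B$ with $1/(\lm\mu)\ll|\B|\leq L_2^{-2}$ the phase $q^2s^2\B$ varies by roughly $\lm\mu|\B|$, which can be as large as $\eta^{-2}$, so the sum oscillates and depends on $\lm$ in a way that cannot match any fixed profile $\widetilde{\psi}(L_2^2\B)$ uniformly to within $O(\VE)$. Consequently your claimed identity $\Lambda_q(f,f)=\int_0^1|\widehat{f}|^2\widehat{\psi}_{q,L_2}+O(\VE N/5)$, which your whole comparison rests on, is unproved (and pointwise untrue). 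The paper avoids needing any such approximation of $S_{\lm,\mu,q}$ on major arcs: it decomposes $f=f_1+f_2+f_3$ with $f_1=f*\psi_{q,L_1}$, extracts the main term in \emph{physical space} via Lemma \ref{lemma1} (approximate invariance of $f_1$ under shifts by $t^2\leq 4\lm^2=4\eta^2L_1^2$ with $q\,|\,t$, plus Cauchy--Schwarz — essentially the computation you use for your baseline), and uses the Fourier side only for the error terms, with the sup bound $\|(1-\widehat{\psi}_{q,L_2})S_{\lm,\mu,q}\|_\infty\leq\VE/20$ for the $f_2$ terms and the trivial bound $|S_{\lm,\mu,q}|\leq1$ for the $f_3$ terms, the latter giving directly the integral appearing in (\ref{smoothint}). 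To fix your argument you would in effect have to reproduce this structure: your direct major/minor decomposition at the single scale $\eta$ cannot deliver either of the two estimates you assert.
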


\subsection{Proposition \ref{smoothdichotomy} implies Proposition \ref{dichotomy}}

Let $f=1_A$ and $q=q_\eta$,
noting that $q\leq q_\VE$. 

It is easy to see that if $\Lambda_{q}(f)>(\D^2-\VE)N$, then
\be
\left|\left\{t\in(\lm,\lm+\mu]\cap\Z\,:\,|A\cap(A+t^2)|>(\D^2-2\VE)N\right\}\right|\geq \frac{c\VE}{q}\mu\geq \frac{c\VE}{q_\VE}\mu
\ee
which is precisely the strengthening of (\ref{random}) that we eluded to above (with $2\VE$ in place of $\VE$).

While from the fact that $q|q_\VE$ it follows that
\[\supp\bigl(\widehat{\psi}_{q,L_2}-\widehat{\psi}_{q,L_1}\bigr)\subseteq \supp\bigl(\widehat{\psi}_{q_\VE,\eta_\VE \mu}-\widehat{\psi}_{q_\VE,\VE\eta_\VE^{-1}\lm}\bigr)\]
and hence from the remarks preceding Proposition \ref{smoothdichotomy} (in particular (\ref{cutoff})) that (\ref{smoothint}) implies (\ref{roughint}).


\section{Proof of Proposition \ref{smoothdichotomy}}

\subsection{Decomposition}
Let
$f:[1,N]\rightarrow[0,1]$ and $\D=N^{-1}\sum_{n\in\Z}f(n)$.

We make the decomposition
\be
f=f_1+f_2+f_3
\ee
where
\be
f_1=f*\psi_{q,L_1}\quad\text{and}\quad
f_2=f-f*\psi_{q,L_2}
\ee
which of course forces 
\be
f_3=f*(\psi_{q,L_2}-\psi_{q,L_1}).
\ee

One should think of $f_1(n)$ (respectively $f*\psi_{q,L_2}(n)$) as being essentially the average value of the function $f$ over arithmetic progressions of difference $q^2_\eta$ and (total) length $L^2_1=\eta^{-2}\lm^2$ (respectively $L^2_2=\eta^2 \mu^2$) centered at $n$.

\subsection{Proof of Proposition \ref{smoothdichotomy}}

Note that
\be
\Lambda_{q}(f,f)=\Lambda_{q}(f_1,f_1)+\underbrace{\Lambda_{q}(f_2,f_1)+\Lambda_{q}(f,f_2)}_{(\star)}+\underbrace{\Lambda_{q}(f_3,f_1)+\Lambda_{q}(f,f_3)}_{(\star\star)}
\ee
where both terms in $(\star)$ involve a $f_2$ and both terms in $(\star\star)$ involve a $f_3$.

The proof of Proposition \ref{smoothdichotomy} will follow as an almost immediate consequence of the following two lemmas.

\begin{lem}[Main term]\label{lemma1}
Let $\VE>0$. If $0<\eta\ll \VE$, 
then
\be
\Lambda_q(f_1,f_1)\geq (\D^2-\VE/2)N
\ee
\end{lem}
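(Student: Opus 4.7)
My plan is to pass to the Fourier side, show the restricted Weyl sum is essentially equal to $1$ on the support of $\widehat{f_1}$, and then apply Cauchy--Schwarz to obtain a lower bound on $\|f_1\|_2^2$.

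By (\ref{FTside}) with $g=h=f_1$,
\[\Lambda_q(f_1,f_1)=\int_0^1|\widehat{f_1}(\A)|^2\,S_{\lm,\mu,q}(\A)\,d\A,\]
and since $\widehat{f_1}=\widehat{f}\cdot\widehat{\psi}_{q,L_1}$, the integrand is supported on the major arcs $M_{q,L_1}$. The key step is to verify that $S_{\lm,\mu,q}(\A)=1+O(\eta)$ on $M_{q,L_1}$. Writing $\A=a/q^2+\beta$ with $|\beta|\leq L_1^{-2}=\eta^2/\lm^2$ and substituting $t=qs$ in the definition of $S_{\lm,\mu,q}$, one has $e^{2\pi i t^2\A}=e^{2\pi i s^2 a}e^{2\pi i q^2 s^2\beta}=e^{2\pi i q^2 s^2\beta}$, because $s^2 a\in\Z$. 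Since $qs\leq\lm+\mu\leq 2\lm$ the remaining phase satisfies $|q^2 s^2\beta|\leq 4\eta^2$, so each exponential equals $1+O(\eta^2)$. The number of admissible $s$ is $\mu/q+O(1)$, and together with the $q/\mu$ normalization this contributes an additional error of $O(q/\mu)=O(\eta)$ using the hypothesis $\mu\gg\eta^{-1}q$.

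Once this is in hand,
\[\Lambda_q(f_1,f_1)\geq\|f_1\|_2^2-C\eta\,\|f_1\|_2^2\geq\|f_1\|_2^2-C\eta N,\]
where I use $f_1\in[0,1]$ (from $\psi_{q,L_1}\geq 0$ with $\sum\psi_{q,L_1}=1$) to bound $\|f_1\|_2^2\leq\|f_1\|_1\leq\D N\leq N$. For the lower bound on $\|f_1\|_2^2$ I would invoke Cauchy--Schwarz:
\[\|f_1\|_2^2\geq\frac{\bigl(\sum_n f_1(n)\bigr)^2}{|\supp f_1|}=\frac{(\D N)^2}{|\supp f_1|},\]
noting that $\sum_n f_1(n)=\widehat{f_1}(0)=\widehat{f}(0)\widehat{\psi}_{q,L_1}(0)=\D N$. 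The support of $f_1$ lies in $[1-L_1^2,N+L_1^2]$, of cardinality at most $N+2L_1^2$, and the standing constraints $\eta_\VE\leq\VE\eta$ and $N\gg\eta_\VE^{-2}\lm^2$ force $L_1^2=\eta^{-2}\lm^2\ll\VE^2 N$. Hence $|\supp f_1|\leq N(1+O(\VE^2))$ and $\|f_1\|_2^2\geq\D^2 N-C\VE^2 N$.

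Assembling the estimates yields $\Lambda_q(f_1,f_1)\geq\D^2 N-C\VE^2 N-C\eta N\geq(\D^2-\VE/2)N$ as soon as $\eta$ is taken a sufficiently small multiple of $\VE$. I expect the main obstacle to be the Weyl sum estimate on the major arc: this is precisely where the divisibility constraint $q\mid t$ and the choice $L_1=\eta^{-1}\lm$ (which ties the length of the Weyl sum to the width of the arc) interact to deliver the required $O(\eta)$ gain. The rest is bookkeeping with Parseval and Cauchy--Schwarz.
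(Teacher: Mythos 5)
Your argument is correct in spirit and reaches the same conclusion, but you take a genuinely different (Fourier‑side) route to the key inequality, and there is one small technical slip worth flagging.

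The paper proves $\Lambda_q(f_1,f_1)\geq \|f_1\|_2^2-\tfrac{\VE}{4}\D N$ on the \emph{physical} side: since $q\mid t$ and $t\leq 2\lm=2\eta L_1$, the translate $f_1(\cdot-t^2)$ differs from $f_1$ by $O(\eta^2)$ pointwise (smoothness of $\psi$ plus $t^2\equiv 0\ (\mathrm{mod}\ q^2)$, $t^2\ll L_1^2$), and then one sums. You instead pass to the Fourier side and show directly that $S_{\lm,\mu,q}(\A)=1+O(\eta)$ on the major arcs $M_{q,L_1}$ supporting $\widehat{f_1}$. Your computation there is correct: writing $\A=a/q^2+\beta$ with $|\beta|\leq L_1^{-2}=\eta^2/\lm^2$ and $t=qs$, the phase $t^2\A$ reduces mod $\Z$ to $q^2s^2\beta=O(\eta^2)$, and the $\mu/q+O(1)$ count together with $\mu\gg\eta^{-1}q$ gives the $O(\eta)$ loss. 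This is the dual of the paper's translation‑invariance estimate; both isolate the same arithmetic fact (the divisibility $q\mid t$ makes $t^2\A$ essentially an integer on the arcs, the length constraint $t\leq 2\eta L_1$ makes the residual phase small). Neither is more powerful than the other; yours is arguably slightly cleaner since it avoids the pointwise $f_1(n)-f_1(n-t^2)$ bookkeeping and plugs directly into the $L^2$ identity.

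The slip: you assert that $\supp f_1\subseteq[1-L_1^2,N+L_1^2]$, hence $|\supp f_1|\leq N+2L_1^2$, and apply Cauchy--Schwarz with that cardinality. This is false as stated: $\psi$ is a Schwartz function whose Fourier transform $\widetilde\psi$ has compact support, so $\psi$ itself cannot be compactly supported, and $f_1=f*\psi_{q,L_1}$ has infinite support. The correct statement, which is what the paper uses, is that $f_1$ is \emph{concentrated} on an interval of length $N(1+O(\VE))$: one restricts the Cauchy--Schwarz to $[-\VE N/16,\,N+\VE N/16]$ and uses the rapid decay $\sum_{|m|\geq \VE N/16}\psi_{q,L_1}(m)\leq \VE/16$ (valid since $N\gg L_1^2$) to show $\sum_{[-\VE N/16,\,N+\VE N/16]}f_1\geq \D N(1-\VE/16)$. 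Your constraint chain $L_1^2=\eta^{-2}\lm^2\leq\VE^2\eta_\VE^{-2}\lm^2\ll\VE^2 N$ is exactly the right one to make this work, so the fix is routine, but as written the support claim does not hold.
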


\begin{lem}[Error term]\label{lemma3}
Let $\VE>0$, then there exists $\eta>0$ satisfying $\exp(-C'\VE^{-1}\log\VE^{-1})\leq\eta\ll\VE$, such that
\be\label{wholepoint}
\|(1-\widehat{\psi}_{q,L_2})S_{\lm,\mu,q}\|_\infty\leq\VE/20
\ee 
and hence
\be
|\Lambda_{q}(f_2,f_1)+\Lambda_{q}(f,f_2)|\leq (\VE/10)N.
\ee
\end{lem}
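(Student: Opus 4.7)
The plan is to first establish the pointwise bound $\|(1-\widehat{\psi}_{q,L_2})S_{\lm,\mu,q}\|_\infty\leq\VE/20$ via a major/minor arc decomposition, and then deduce the bilinear estimate from Plancherel. Split $[0,1]$ into $M_{q,L_2}$ and its complement. On the minor arcs $\A\notin M_{q,L_2}$, the factor $(1-\widehat{\psi}_{q,L_2})$ is bounded by $1$, so it suffices to control $|S_{\lm,\mu,q}(\A)|$. Using the identity $S_{\lm,\mu,q}(\A)=S_{\lm/q,\mu/q}(q^2\A)$ from the remark in Section 4, I would apply Lemma \ref{Weyl Estimates} to $S_{\lm/q,\mu/q}$ at the frequency $q^2\A$. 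Since $q=q_\eta$ is the LCM of $\{1\leq q_0\leq\eta^{-2}\}$, every rational with denominator $q_0^2$ for such $q_0$ lies in $\{a/q^2:a\in\Z\}$, so the minor-arc hypothesis for the rescaled Weyl sum is inherited from $\A\notin M_{q,L_2}$; the conclusion is $|S_{\lm,\mu,q}(\A)|\leq C_1\eta$, which is at most $\VE/20$ provided $\eta\leq\VE/(20C_1)$.

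On the major arcs, write $\A=a/q^2+\beta$ with $|\beta|\leq 1/L_2^2$. Because $q\mid t$ forces $q^2\mid t^2$, the phase $e^{2\pi i t^2 a/q^2}$ is trivial and we again have $S_{\lm,\mu,q}(\A)=S_{\lm/q,\mu/q}(q^2\beta)$. A Euler-Maclaurin/Riemann sum approximation compares this Weyl sum with the continuous integral $I(\beta)=\mu^{-1}\int_{\lm}^{\lm+\mu}e^{2\pi i u^2\beta}\,du$, with error of order $q/\mu$, which is negligible by $\mu\gg\eta^{-1}q$. Meanwhile Poisson summation gives $\widehat{\psi}_{q,L_2}(\A)=\widetilde{\psi}(L_2^2\beta)$ in this range. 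The plan is to choose the Schwartz function $\widetilde{\psi}$ (nonnegative, supported in $[-1,1]$, equal to $1$ at the origin, and with a broad near-$1$ plateau) so that $|I(\beta)-\widetilde{\psi}(L_2^2\beta)|$ is at most $\VE/80$ uniformly on $|\beta|\leq 1/L_2^2$; this upgrades to $|S_{\lm,\mu,q}-\widehat{\psi}_{q,L_2}|\leq\VE/40$ on $M_{q,L_2}$. Since both functions are bounded in modulus by $1$, writing $(1-\widehat{\psi}_{q,L_2})S_{\lm,\mu,q}=(S_{\lm,\mu,q}-\widehat{\psi}_{q,L_2})+\widehat{\psi}_{q,L_2}(1-S_{\lm,\mu,q})$ and estimating the second term by comparison with $1-\widehat{\psi}_{q,L_2}=\widehat{\psi}_{q,L_2}-\widehat{\psi}_{q,L_2}^2+O(\VE/40)$ yields the desired $\VE/20$ bound on major arcs.

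The main obstacle is the quantitative comparison on the major arcs, in particular the transition region near $|\beta|\sim 1/L_2^2$, where neither $(1-\widehat{\psi}_{q,L_2})$ nor $S_{\lm,\mu,q}$ is individually small: this forces a careful tailoring of $\widetilde{\psi}$ so that its descent from $1$ to $0$ tracks the decay of $I(\beta)$, and a careful use of oscillation (stationary phase) for $I(\beta)$ at the edge of the arc. The existence of a single $\eta$ in the range $[\exp(-C'\VE^{-1}\log\VE^{-1}),\,\VE]$ for which all error bounds hold simultaneously is obtained by pigeonholing over a dyadic scale, the lower bound on $\eta$ absorbing the cumulative loss. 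Once $\|(1-\widehat{\psi}_{q,L_2})S_{\lm,\mu,q}\|_\infty\leq\VE/20$ is in hand, the bilinear bound is immediate: using the Fourier representation $\Lambda_q(g,h)=\int\widehat{g}\,\overline{\widehat{h}}\,S_{\lm,\mu,q}\,d\A$, the identity $\widehat{f_2}=\widehat{f}(1-\widehat{\psi}_{q,L_2})$, the bound $|\widehat{\psi}_{q,L_1}|\leq 1$, and Plancherel's identity $\|\widehat{f}\|_2^2=\sum f^2\leq N$, each of $|\Lambda_q(f_2,f_1)|$ and $|\Lambda_q(f,f_2)|$ is bounded by $\|(1-\widehat{\psi}_{q,L_2})S_{\lm,\mu,q}\|_\infty\cdot\|\widehat{f}\|_2^2\leq(\VE/20)N$, giving the stated estimate $(\VE/10)N$.
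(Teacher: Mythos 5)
The central gap is in your minor-arc step, and it is exactly the subtlety that the paper's two-scale argument exists to handle. You claim that $\A\notin M_{q,L_2}$ lets you apply Lemma \ref{Weyl Estimates} to $S_{\lm/q,\mu/q}(q^2\A)$ because every rational with denominator $q_0^2$, $q_0\le\eta^{-2}$, lies among the $a/q^2$. But after the substitution $\beta=q^2\A$ the dangerous frequencies are those where $q^2\A$ is close to some $a/q_0^2$, i.e.\ where $\A$ is close to a rational with denominator $q^2q_0^2$ --- a much finer set than $\{a/q^2:a\in\Z\}$ --- and being $1/L_2^2$-far from every $a/q^2$ gives no control at such points. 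Concretely, take $\A=1/(4q^2)$ (note $2\mid q=q_\eta$). Since $L_2=\eta\mu\gg q$ we have $1/(4q^2)>1/L_2^2$, so $\A\notin M_{q,L_2}$ and $1-\widehat{\psi}_{q,L_2}(\A)=1$; yet writing $t=qs$ gives $S_{\lm,\mu,q}(\A)=\frac{1}{\mu/q}\sum_s e^{2\pi i s^2/4}+O(q/\mu)=\frac{1+i}{2}+o(1)$, of modulus about $0.7$. So the bound $|S_{\lm,\mu,q}(\A)|\le C_1\eta$ off $M_{q,L_2}$ is false, and no tailoring of the bump can rescue a single-scale pointwise argument: the product $(1-\widehat{\psi}_{q,L_2})S_{\lm,\mu,q}$ is genuinely large near intermediate rationals $a/(q^2q_0^2)$. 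The paper's proof is organized precisely around this: it introduces a second, much finer scale $\eta'\le(\VE/40C_1)\eta$ with $q'=q_{\eta'}$, chosen so that $qq_0\mid q'$ for the relevant $q_0$; then either $\A$ is very close to some $a/q'^2$, where $1-\widehat{\psi}_{q',\eta'\mu}$ is small, or the rescaled Weyl sum obeys Lemma \ref{Weyl Estimates} with parameter $\eta'/\eta$ (this is Lemma \ref{Weyl Estimates 2}). Passing back from the $q'$-cutoff to the $q$-cutoff costs a term $\|(\widehat{\psi}_{q,L_2}-\widehat{\psi}_{q',\eta'\mu})S_{\lm,\mu,q}\|_\infty$, and the entire point of the ``there exists $\eta$'' in the statement is that one pigeonholes over a lacunary family $\{\eta_j\}$ to find a pair of consecutive scales at which the two cutoffs are close (in \cite{LM3} this pigeonhole is carried out weighted by $|\widehat{f}(\A)|^2$). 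Your proposal never uses the freedom to choose $\eta$ in this structural way --- your pigeonhole only ``absorbs cumulative losses'' --- so the key mechanism of the proof is missing.

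The major-arc half of your argument also does not go through as described. One cannot choose a single fixed nonnegative Schwartz bump so that $\widetilde{\psi}(L_2^2\beta)$ tracks $I(\beta)=\mu^{-1}\int_\lm^{\lm+\mu}e^{2\pi i u^2\beta}\,du$ to within $\VE/80$ on $|\beta|\le 1/L_2^2$: the exponential sum (and $I$) is already $O(\eta)$ once $|\beta|\gg 1/(\lm\mu)$, a range smaller than $1/L_2^2$ by a factor at least $\eta^{-2}$, whereas $\widetilde{\psi}(L_2^2\beta)$ is of order one over most of the arc; moreover $I$ is complex and depends on $\lm/\mu$, and the identity $1-\widehat{\psi}_{q,L_2}=\widehat{\psi}_{q,L_2}-\widehat{\psi}_{q,L_2}^2+O(\VE/40)$ holds only where $\widehat{\psi}_{q,L_2}$ is already near $1$. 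None of this matching is needed: on the (fine-scale) major arcs one only uses that $1-\widehat{\psi}$ is small, not that $S$ approximates $\widehat{\psi}$. Your final step --- deducing the bilinear estimate from (\ref{wholepoint}) via $\widehat{f_2}=(1-\widehat{\psi}_{q,L_2})\widehat{f}$, the bound $|\widehat{\psi}_{q,L_1}|\le 1$, and Plancherel --- is correct and coincides with the paper's.
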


\begin{proof}[Proof of Proposition \ref{smoothdichotomy}]
If
$\Lambda_q(f,f)\leq(\D^2-\VE)N$, then it follows from
Lemma \ref{lemma1} that
\[|\Lambda_q(f,f)-\Lambda_q(f_1,f_1)|\geq(\VE/2)N.\]
Since
\[|\Lambda_{q}(f_3,f_1)+\Lambda_{q}(f,f_3)|\geq |\Lambda_{q}(f,f)-\Lambda_{q}(f_1,f_1)|-
|\Lambda_{q}(f_2,f_1)+\Lambda_{q}(f,f_2)|\]
it consequently follows from Lemma \ref{lemma3} that
\[|\Lambda_{q}(f_3,f_1)+\Lambda_{q}(f,f_3)|\geq (2\VE/5)N.\]

The proposition then follows from the observation that
\be\label{max}
\max\{|\Lambda_{q}(f_3,f_1)|,|\Lambda_{q}(f,f_3)|\}
\leq \int_{0}^1|\widehat{f}(\A)|^2\bigl|\widehat{\psi}_{q,L_2}(\A)-\widehat{\psi}_{q,L_1}(\A)\bigr|\,d\A.
\ee
which follows from standard properties of convolutions under the action of the Fourier transform, identity (\ref{FTside}), and trivial bounds for the exponential sum $S_{\lm,\mu,q}$.\end{proof}


\subsection{Proof of Lemma \ref{lemma1}} Let $q=q_\eta$ and recall that $L_1=\eta^{-1}\lm$.
If $q|t$ and $\lm<t\leq\lm+\mu\leq2\eta L_1$, then it is straightforward to see that $\psi$ can be chosen such that $f_1$ is essentially invariant under translation by $t^2$ in the the sense that
\bee
\left|f_1(n)-f_1(n-t^2)\right|=\frac{q^2}{L_1^2}\sum_{\ell\in\Z}\left|\psi\left(\frac{q^2\ell-t^2}{L_1^2}\right)-\psi\left(\frac{q^2\ell}{L_1^2}\right)\right|\leq c\eta^2
\eee
for some constant $c>0$. 
Therefore, provided $\eta$ is chosen so that $c\eta^2\leq \VE/4$, we have
\bee
\Lambda_q(f_1)
\geq \sum_{n\in\Z} f_1(n)^2 -\frac{\VE}{4}\sum_{n\in\Z}f_1(n).
\eee

Since $\psi_{q,L_1}$ is $L^1$-normalized it follows that
\bee
\sum_{n\in\Z}f_1(n)=\sum_{n,m\in\Z}f(n-m)\psi_{q,L_1}(m)=\sum_{n\in\Z}f(n)=\D N.
\eee
Using Cauchy-Schwarz, one obtains
\bee
\sum_{n\in\Z} f_1(n)^2\geq\sum_{-\VE N/16\leq n\leq N+\VE N/16} f_1(n)^2
\geq  \frac{1}{(1+\VE/8) N} \left(\sum_{-\VE N/16\leq n\leq N+\VE N/16} f_1(n)\right)^2.
\eee
Since $f$ is supported on $[1,N]$ (and $\psi_{q,L_1}$ is $L^1$-normalized) it follows that
\bee
\sum_{-\VE N/16\leq n\leq N+\VE N/16} f_1(n)\geq \sum_{n\in\Z}f(n)\left(1-\sum_{|m|\geq\VE N/16} \psi_{q,L_1}(m)\right)\geq \D N (1-\VE/16)
\eee
as $\psi$ can be chosen so that $\sum_{|m|\geq\E N} \psi_{q,L_1}(m)\leq \E$ whenever $N\gg L_1$.
\qed

\subsection{Proof of Lemma \ref{lemma3}}

It is in establishing Lemma \ref{lemma3} that we finally exploit the arithmetic properties of the set of squares. In particular, we will make use of the following ``minor arc estimates'' for the exponential sums $S_{\lm,\mu,q}$.

\begin{lem}[Corollary of Lemma \ref{Weyl Estimates}]\label{Weyl Estimates 2}
Let $\VE>0$. If $0<\eta\ll\VE$ and $0<\eta'<\VE\eta$, then
\be\label{MainSquares}
\|(1-\widehat{\psi}_{q',L'_2})S_{\lm,\mu,q}\|_\infty\leq 2C_1\eta'/\eta
\ee
where  $q'=q_{\eta'}$ and $L'_2=\eta'\mu$.
\end{lem}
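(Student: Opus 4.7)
The plan is to recognise $S_{\lm,\mu,q}(\A)$ as a rescaled quadratic Weyl sum: via the substitution $t=qs$,
\[
S_{\lm,\mu,q}(\A) = \frac{1}{\mu/q}\sum_{s\in(\lm/q,(\lm+\mu)/q]\cap\Z} e^{2\pi i s^2(q^2\A)},
\]
which (by the Remark after Lemma \ref{Weyl Estimates}) is a perturbed Weyl sum of length $\mu/q$ at frequency $q^2\A$. Applied with parameter $\tilde\eta:=\eta'/\eta$, the Lemma yields $|S_{\lm,\mu,q}(\A)|\leq 2C_1\eta'/\eta$ whenever $|\A-a/(qr)^2|>\eta^2/(\eta'\mu)^2$ for every $a\in\Z$ and every $1\leq r\leq(\eta/\eta')^2$. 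Together with the trivial bound $|1-\widehat{\psi}_{q',L'_2}|\leq 1$, this delivers the claimed estimate on the set where that ``minor-arc'' condition holds.

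The crux of the argument is the arithmetic claim that, for every $r\leq(\eta/\eta')^2$, the product $qr=q_\eta r$ divides $q'=q_{\eta'}$. I expect to verify this prime by prime. Writing $A_p:=\log_p\eta^{-2}$ and $B_p:=\log_p\eta'^{-2}$, the relevant valuations satisfy $v_p(q_\eta)=\lfloor A_p\rfloor$, $v_p(q')=\lfloor B_p\rfloor$, and $v_p(r)\leq\lfloor\log_p r\rfloor\leq\lfloor B_p-A_p\rfloor$; the elementary floor inequality $\lfloor B_p-A_p\rfloor\leq\lfloor B_p\rfloor-\lfloor A_p\rfloor$ then gives $v_p(qr)\leq v_p(q')$, as required (with the cases $\eta^{-2}<p\leq\eta'^{-2}$ and $p>\eta'^{-2}$ handled analogously but more easily). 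Consequently each rational $a/(qr)^2$ is of the form $b/q'^2$ for some $b\in\Z$, and (using $\eta<1$) the Weyl-minor-arc hypothesis is implied by $|\A-b/q'^2|>1/L'_2{}^2$ for all $b$, i.e., by $\widehat{\psi}_{q',L'_2}(\A)=0$.

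On the remaining major arcs $|\A-b/q'^2|\leq 1/L'_2{}^2$, I plan to obtain the bound by combining the smooth decay of $1-\widehat{\psi}_{q',L'_2}$ near $b/q'^2$ (via the behaviour of $\widetilde\psi$ at the origin) with the improved control on $|S_{\lm,\mu,q}|$ afforded by the classical Gauss-sum factorisation of $S_{\lm/q,\mu/q}$ on its major arcs: since $q'/q=q_{\eta'}/q_\eta$ contains every prime in $(\eta^{-2},\eta'^{-2}]$ and is thus exponentially larger than the polynomial quantity $(\eta/\eta')^2$, the factorisation forces $|S_{\lm,\mu,q}|$ comfortably below $\eta'/\eta$ on these major arcs. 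The principal obstacle will be making this major-arc analysis uniform---particularly in handling reducible rationals $b/q'^2$ whose numerators share common factors with $q'/q$, where the effective Gauss-sum denominator collapses; the arithmetic divisibility $qr\mid q'$ is much cleaner by comparison.
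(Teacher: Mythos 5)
Your minor-arc analysis is correct and is exactly the paper's argument: the rescaling $S_{\lm,\mu,q}(\A)=\frac{1}{\mu/q}\sum_{s}e^{2\pi i s^2(q^2\A)}$, the application of Lemma \ref{Weyl Estimates} (in its perturbed form) with parameter $\eta'/\eta$, and the divisibility $q_\eta r\mid q_{\eta'}$ for all $r\leq(\eta/\eta')^2$ proved prime by prime via $\lfloor A_p\rfloor+\lfloor B_p-A_p\rfloor\leq\lfloor B_p\rfloor$ (the paper states this as $q_\eta\, q_{\eta'/\eta}\mid q_{\eta'}$, which is the same computation). The gap is in your treatment of the complementary set. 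By splitting at the full radius $1/L_2'^2$ you are left with the entire supports $|\A-b/q'^2|\leq 1/L_2'^2$ as ``major arcs'', and there neither factor is small: near the edge of an arc $1-\widehat{\psi}_{q',L_2'}$ is of size $1$, while near a center $b/q'^2$ whose reduced denominator is small the sum itself is of size $1$ (e.g.\ $S_{\lm,\mu,q}(0)=1$ and $0$ lies in your major arc with $b=0$). So the claim that the Gauss-sum factorisation forces $|S_{\lm,\mu,q}|$ below $\eta'/\eta$ throughout these arcs is false as stated; what your plan actually requires is a uniform trade-off across each arc between the decay of $S_{\lm,\mu,q}$ away from the center (Gauss sums plus a first/second-derivative or stationary-phase estimate, with error terms) and the vanishing of $1-\widehat{\psi}_{q',L_2'}$ at the center, uniformly over all reduced denominators dividing $q'^2$ --- precisely the analysis you flag as the ``principal obstacle'' and do not carry out.

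The missing idea, and the paper's fix, is to shrink the exceptional neighborhoods rather than fight on the full arcs. The Weyl hypothesis you need only asks that $\A$ be at distance greater than $\eta^2/(\eta'\mu)^2$ from the points $a/(qr)^2$, and by your own divisibility every such point is of the form $b/q'^2$; hence the minor-arc estimate $|S_{\lm,\mu,q}(\A)|\leq 2C_1\eta'/\eta$ already holds whenever $|\A-b/q'^2|>\VE/(\eta'\mu)^2$ for all $b$ (using only $\eta^2\leq\VE$, which follows from $\eta\ll\VE$). On the remaining neighborhoods of radius $\VE/L_2'^2$ --- a factor $\VE$ smaller than the support of the bump --- one has $|1-\widehat{\psi}_{q',L_2'}(\A)|\leq\VE$ simply from $\widetilde{\psi}(0)=1$ and the smoothness of $\widetilde{\psi}$ at the origin (this is the bound (\ref{*}) in the paper, and it is all that is needed for the application in Lemma \ref{lemma3}); no information about $S_{\lm,\mu,q}$ is used there at all. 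In short: you threw away exactly the needed slack when you replaced $\eta^2/(\eta'\mu)^2$ by $1/(\eta'\mu)^2$ ``using $\eta<1$''; keeping the smaller radius in the case split closes the proof along the paper's lines and makes the entire major-arc Gauss-sum program unnecessary.
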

\begin{proof}
Let $\eta_0=\eta'/\eta$ and  $\A\in[0,1]$ be fixed. 
If there exists $a\in\Z$ such that
\[\left|\A-\frac{a}{q'^2}\right|\leq\frac{\VE}{(\eta'\mu)^2},\]
then (as remarked earlier) $\psi$ can be chosen such that
\be\label{*}
|1-\widehat{\psi}_{q',L'_2}(\A)|\leq \VE.\ee

While if
\bee
\left|\A-\frac{a}{q'^2}\right|>\frac{\VE}{(\eta'\mu)^2}
\eee
for all $a\in\Z$,
then
\[\left|q^2\A-\frac{a}{q_0^2}\right|>\frac{q^2}{(\eta_0)^2\mu^2}\]
for all $a\in\Z$, since $q q_0|q'$ where $q_0=q_{\eta_0}$.

It therefore follows from the fact that
\[S_{\lm,\mu,q}(\A)=\frac{1}{\mu'}\sum_{s\in(\lm'+q^{-1},\lm'+\mu']\cap\Z} e^{2\pi i s^2(q^2\A)}\]
where $\lm'=\lm/q$ and $\mu'=\mu/q$ and the remark proceeding Lemma \ref{Weyl Estimates} 
that
\be\label{minorest 2}
\left|S_{\lm,\mu,q}(\A)\right|\leq 2C_1\eta_0.
\ee
Estimate (\ref{MainSquares}) follows immediately from (\ref{*}) and (\ref{minorest 2}).
\end{proof}

\begin{proof}[Proof of Lemma \ref{lemma3}]

We first construct the number $\eta>0$. Choosing a lacunary sequence $\{\eta_j\}$ for which 
\bee
\eta_1\ll \VE\quad \text{and}\quad \eta_{j+1} \leq(\VE/40C_1)\eta_{j}
\eee 
for each $j\geq1$ it is easy to see that
\[\sup_{\A\in[0,1]}\sum_{j=1}^\infty\bigl|\widehat{\psi}_{j+1}(\A)-\widehat{\psi}_j(\A)\bigr|\leq C_2\]
where $\widehat{\psi}_j=\widehat{\psi}_{q_{j},\eta_j\mu}$ with $q_j=q_{\eta_j}$. It follows immediately that there must exist $1\leq j\leq 40C_2/\VE$ such that
\be\label{80}\|\widehat{\psi}_{j+1}-\widehat{\psi}_j\|_\infty\leq  \VE/40.\ee
We set $\eta=\eta_j$ and $\eta'=\eta_{j+1}$ for this value of $j$ and note that $\eta$ satisfies the inequality 
\[\exp(-C'\VE^{-1}\log\VE^{-1})\leq\eta\ll\VE.\]

Estimate (\ref{wholepoint}) now follows immediately from Lemma \ref{Weyl Estimates 2} and (\ref{80}), since 
\be
\|(1-\widehat{\psi}_{q,L_2})S_{\lm,\mu,q}\|_\infty\leq\|(1-\widehat{\psi}_{q',L_2})S_{\lm,\mu,q}\|_\infty+\|(\widehat{\psi}_{q,L_2}-\widehat{\psi}_{q',L_2})S_{\lm,\mu,q}\|_\infty\leq 2C_1\eta'/\eta+\VE/40
\ee
and $\eta'/\eta\leq \VE/80C_1$.

Lemma \ref{lemma3} now follows, since by
arguing as in the proof of Proposition \ref{smoothdichotomy} above, we obtain
\begin{align*}
\max\{|\Lambda_{q}(f_2,f_1)|,|\Lambda_{q}(f,f_2)|\}&\leq \int_{0}^1 |\widehat{f}(\A)|^2\,\bigl|1-\widehat{\psi}_{q,L_2}(\A)\bigr||S_{\lm,\mu,q}(\A)|\,d\A\\
& \leq \|(1-\widehat{\psi}_{q,L_2})S_{\lm,\mu,q}\|_\infty N\end{align*}
where the last inequality follows from Plancherel and the fact that $\|f\|_2^2\leq\|f\|_1\leq N$. 
\end{proof}


\begin{appendix}
\section{A Varnavides-type theorem for square differences}

The purpose of this section is to prove the following theorem. 

\begin{thm}\label{3}
Let $0<\D\leq1$. There exists $c=c(\D)$ such that if $A\subseteq[1,N]$ with $|A|=\D N$, then
\[\sum_{t=1}^{N^{1/2}}|A\cap(A+t^2)|\geq cN^{3/2}.\]
\end{thm}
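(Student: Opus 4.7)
The plan is to run a Varnavides-style averaging argument, deducing the counting statement of Theorem \ref{3} from the qualitative existence statement of Theorem \ref{1}. The twist required to reach the correct $N^{3/2}$ bound, as opposed to merely $\Omega(N)$, is to exploit arithmetic progressions whose common difference is itself a perfect square, and then sweep across a family of scales.

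Fix $M=N(\D/2)$, the threshold furnished by Theorem \ref{1} at density $\D/2$. For each integer $s\in[1,S]$ with $S=\lfloor\sqrt{N/(2M)}\rfloor$ and each admissible start $x\geq 1$, consider the arithmetic progression
\[P_{x,s}=\{x,\,x+s^2,\,x+2s^2,\,\ldots,\,x+(M-1)s^2\}\subseteq[1,N].\]
The map $i\mapsto x+is^2$ is a bijection $[0,M-1]\to P_{x,s}$ that converts square \emph{index} differences into square differences at scale $s^2$. For fixed $s$, a double count gives that the average of $|A\cap P_{x,s}|/M$ over the $\gtrsim N$ valid starts $x$ equals $\D$, so pigeonhole yields at least $c_1(\D)N$ ``good'' starts for which $|A\cap P_{x,s}|\geq(\D/2)M$. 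Applying Theorem \ref{1} to the reindexed copy of $A\cap P_{x,s}$ inside $[1,M]$ produces a pair $(a,a')\in A^2$ with $a-a'=(rs)^2$ for some $r\in[1,\sqrt{M-1}]$.

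The final step is a counting exchange. For fixed $s$, any pair $(a,a')$ with $a-a'=(rs)^2$ lies in at most $M$ of the progressions $P_{x,s}$, so at scale $s$ the good APs produce at least $c_1(\D)N/M$ distinct pairs with square differences of the form $(rs)^2$, $r\in[1,\sqrt{M-1}]$. Summing over $s\in[1,S]$ yields a lower bound of order $c_1(\D)N^{3/2}/M^{3/2}$ for the weighted count $\sum_{s,r}|A\cap(A+(rs)^2)|$. A given pair with square difference $k^2$ is produced at scale $s$ only if $s\mid k$ and $k/s\leq\sqrt{M-1}$, so the number of contributing scales is bounded by $\sqrt{M-1}$. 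Dividing by this bounded overcount gives $\sum_{t=1}^{\sqrt{N}}|A\cap(A+t^2)|\geq c(\D)N^{3/2}$ with $c(\D)\gtrsim\D/M(\D)^2$, valid once $N$ is large in terms of $\D$ (the finitely many smaller $N$ are absorbed by shrinking $c(\D)$).

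The essential obstacle, and the reason for working with progressions of common difference $s^2$ rather than ordinary APs (and for truncating the S\'ark\"ozy-index $r$ at $\sqrt{M-1}$), is precisely to avoid a divisor-function loss: the naive Varnavides argument using length-$M$ APs of arbitrary common difference would incur an overcount of order $\log N$ per pair, degrading the target $N^{3/2}$ bound to $N^{3/2}/\log N$. Here the truncation caps the number of contributing scales by a function of $\D$ alone, so the loss is safely absorbed into $c(\D)$.
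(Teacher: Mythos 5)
Your proposal follows essentially the same route as the paper: progressions $\{x,x+s^2,\dots,x+(M-1)s^2\}$ with \emph{square} common difference, an averaging/pigeonhole step producing many ``good'' progressions, an application of S\'ark\"ozy's theorem inside each good progression after reindexing, and a divisibility argument ($s\mid t$, $t/s\le\sqrt{M}$) to control the overcount; your bookkeeping (overcount $M\cdot\sqrt{M}$, split per scale and across scales) differs only cosmetically from the paper's single $M^2$ bound. The one step that does not hold as written is the averaging claim at the top of your range of scales: with $S=\lfloor\sqrt{N/(2M)}\rfloor$ the progression $P_{x,s}$ for $s$ near $S$ spans a length $(M-1)s^2$ comparable to $N/2$, so edge effects are not negligible and the average of $|A\cap P_{x,s}|/M$ over valid starts need not be close to $\delta$ --- for instance if $A=[1,\delta N]$ the average at scale $s=S$ is of order $\delta^2$, which for small $\delta$ is below your goodness threshold $\delta/2$, so the pigeonhole gives you nothing at those scales. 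The paper avoids exactly this by imposing $t^2\le \delta N/M^2$ and $n\le N(1-\delta/M)$, so that each progression occupies only a $\delta/M$-fraction of $[1,N]$ and the boundary loss is $O(1/M)$. The fix for your version is the same and costs only a $\delta$-dependent constant: restrict to $s\le c\sqrt{\delta N/M}$ (still $\gtrsim_{\delta,M}\sqrt{N}$ scales), after which every assertion you make goes through and yields $c(\delta)\gtrsim \delta^{O(1)}/M^2$ as you state. So the approach is sound and matches the paper's; only the choice of the scale cutoff needs tightening.
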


Theorem \ref{3} strengthens S\'ark\"ozy's theorem (Theorem \ref{1}) in the same way in which a theorem of Varnavides \cite{V} strengthens Roth's theorem on arithmetic progressions of length three. It guarantees the existence of ``many'' square difference in a set of positive density, instead of just one.

The proof of this result combines S\'ark\"ozy's theorem with a modification of Varnavides' original combinatorial argument \cite{V}.
We will closely follow the presentation given in \cite{HL} and \cite{Sound}.

\begin{proof}
Let $A\subseteq[1,N]$ such that $|A|=\D N$ with $N$ sufficiently large. 
By S\'ark\H{o}zy's theorem we know that there exists $M=M(\D)$ such that any set with at least $\D M/2$ elements in $[1,M]$ will contain a non-trivial square difference.

Now consider the arithmetic progressions
\[P_{n,t}=\{n,n+t^2,\dots,n+(M-1)t^2\}\subseteq[1,N]\]
with $t^2\leq \D N/M^2$ and $n\leq N(1-\D/M)$. 

We say that such a progression $P_{n,t}$ is \emph{good} if
\[\frac{|A\cap P_{n,t}|}{M}\geq\frac{\D}{2}.\]
A simple counting argument shows that there are at least
$(\D N)^{3/2}/M$ \emph{good} progressions $P_{n,t}$.

By S\'ark\H{o}zy's theorem each good progression contributes at least one square difference in $A$. But of course some of these square differences could get over counted. Suppose we are 
given a pair $\{n,n+s^2\}$ in $A$. If this pair is contained in $P_{n,t}$, then $t$ must be a divisor of $s$ and moreover $s^2\leq Mt^2$. It therefore follows that there are at most $M$ choices for $t$ and it is easy to see that each choice of $t$ fixes $n$ in at most $M$ ways. Therefore each square difference is over 
counted at most $M^2$ times. 

It follows that $A$ must contain at least $(\D N)^{3/2}/M^3=c(\D)N^{3/2}$
distinct square differences, as required
\end{proof}
\end{appendix}

\end{document}